\def\imm{\protect\operatorname{Imm}}
\def\RP{\mathbb{R}\mbox{\textrm P}}
\def\TC{\protect\operatorname{TC}}
\def\zcl{\protect\operatorname{zcl}}
\newtheorem{proposition}{Proposition}[section]
\newtheorem{corollary}[proposition]{Corollary}
\newtheorem{theorem}[proposition]{Theorem}
\newtheorem{lemma}[proposition]{Lemma}
\begin{document}

\title{Sequential motion planning in connected sums of real projective spaces}
\author{Jorge Aguilar-Guzm\'an and Jes\'us Gonz\'alez}
\date{\empty}

\maketitle

\begin{abstract}
In this short note we observe that the higher topological complexity of an iterated connected sum of real projective spaces is maximal possible. Unlike the case of regular TC, the result is accessible through easy mod 2 zero-divisor cup-length considerations.
\end{abstract}

{\small 2010 Mathematics Subject Classification: Primary 55S40, 55M30; Secondary  70Q05.}

{\small Keywords and phrases: higher topological complexity, connected sum, real projective space.}

\section{Introduction}
It was proved in~\cite{MR1988783} that the topological complexity (TC) of the $m$-th dimensional real projective space $\RP^m$ agrees\footnote{This characterization holds as long as $\RP^m$ is not parallelizable: $\TC(\RP^m)=\imm(\RP^m)-1=m$ for $m=1,3,7$.} with $\imm(\RP^m)$, the minimal dimension $d$ so that $\RP^m$ admits a smooth immersion in $\mathbb{R}^d$. Cohen and Vandembroucq have recently shown in~\cite{connectedsums} that the fact above does not hold for $g\RP^m$, the $g$-iterated connected sum of $\RP^m$ with itself, if $g\geq2$. Indeed, $\TC(g\RP^m)$ is maximal possible whenever $g\ge2$, a result that contrasts with the currently open problem of assessing how much $\TC(\RP^m)$ deviates from~$2m$.

Cohen and Vandembroucq's result for $\TC(g\RP^m)$ extends their impressive calculation in~\cite{LDKlein}, using obstruction theory, of the topological complexity of non orientable closed surfaces. In this short note we observe that a simple minded zero-divisor cup-length argument suffices to prove the analogous fact for Rudyak's higher topological complexity $\TC_s$:

\begin{theorem}\label{maintheorem}
For $g,m\ge2$ and $s\ge3$, $\TC_s(g\mathbb{R}\mbox{\emph{P}}^m)=sm$.
\end{theorem}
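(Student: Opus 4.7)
The plan is to prove the two inequalities separately. The upper bound $\TC_s(g\RP^m)\le sm$ is immediate from the standard inequality $\TC_s(X)\le s\cdot\dim(X)$ applied to the $m$-dimensional manifold $g\RP^m$. For the lower bound I will exhibit $sm$ mod-$2$ zero-divisors whose product is nonzero; since $\TC_s(X)\ge\zcl_s(X)$, this yields $\TC_s(g\RP^m)\ge sm$.

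The first step is the cohomology ring of $g\RP^m$. Using the pinch maps $\pi_j\colon g\RP^m\to\RP^m$ that collapse all summands but the $j$-th and setting $x_j:=\pi_j^*(x)\in H^1(g\RP^m;\mathbb{F}_2)$, one has $x_jx_{j'}=0$ for $j\ne j'$, $x_j^m=\mu$ (the unique nonzero top class) for every $j$, and $x_j^{m+1}=0$. For $m>2$ the cross relation $x_jx_{j'}=0$ follows from the pinch $g\RP^m\to\bigvee_g\RP^m$ being a mod-$2$ cohomology isomorphism below the top; for $m=2$ it follows geometrically since $x_j$ and $x_{j'}$ are represented by loops in disjoint $\RP^2$-summands. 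Writing $x_{j,i}:=1\otimes\cdots\otimes x_j\otimes\cdots\otimes 1$ with $x_j$ in the $i$-th slot of the K\"unneth tensor, the degree-$1$ classes $\bar{x}_{j,k}:=x_{j,1}+x_{j,k}$ ($k\ge 2$) are zero-divisors because the diagonal sends $x_{j,i}\mapsto x_j$ for every $i$.

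The key step is to verify that
\[
P\;:=\;\prod_{k=2}^{s}\bar{x}_{1,k}^{\,m}\;\cdot\;\bar{x}_{2,2}^{\,m-1}\cdot\bar{x}_{2,3}
\]
equals $\mu^{(1)}\cdots\mu^{(s)}\ne 0$ in $H^{sm}((g\RP^m)^s;\mathbb{F}_2)$. The total number of degree-one factors is $(s-1)m+(m-1)+1=sm$, using $g\ge 2$ (to access the second generator $x_2$) and $s\ge 3$ (to access slot $k=3$). I would expand $P$ as a sum of monomials by picking one of the two summands of $\bar{x}_{j,k}=x_{j,1}+x_{j,k}$ from each of the $sm$ factors. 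In every tensor slot $i$, the relations $x_{1,i}x_{2,i}=0$ and $x_{j,i}^{m+1}=0$ kill any monomial that mixes $x_{1,i}$ with $x_{2,i}$ or exceeds the truncation, so the surviving contributions in each slot are pure powers of one of $x_{1,i}$, $x_{2,i}$. Examining slot by slot shows that the unique surviving monomial is obtained by picking the $x_{1,k}$-summand from every factor of each $\bar{x}_{1,k}^{\,m}$ (so that slot $k$ gets $x_{1,k}^m=\mu^{(k)}$ for $k\ge 2$) and the $x_{2,1}$-summand from every factor of $\bar{x}_{2,2}^{\,m-1}\bar{x}_{2,3}$ (so that slot $1$ gets $x_{2,1}^m=\mu^{(1)}$). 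This term carries coefficient $1\pmod 2$, giving $P=\mu^{(1)}\cdots\mu^{(s)}$.

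The main obstacle is the combinatorial uniqueness claim. The one competing pathway would take $\mu^{(k)}$ from $x_{2,k}^{c_k}$ in some slot $k\ge 2$ instead of from $x_{1,k}^m$, which requires that the whole $c_k$ units from the $\bar{x}_{2,\bullet}$-part lie in a single slot $k$, i.e.\ $c_k\ge m$. Under our choice $c_2=m-1$, $c_3=1$, $c_k=0$ for $k\ge 4$, every $c_k$ is strictly smaller than $m$, so no such pathway exists; were there one, it would contribute with coefficient $1\pmod 2$ and cancel the main term. The very possibility of this splitting---distributing the $m$-unit budget across at least two $\bar{x}_{2,\bullet}$-factors each of size $<m$---is precisely what $s\ge 3$ (together with $m\ge 2$) guarantees, and is what makes the naive zcl argument available for higher $\TC_s$ while failing for $\TC_2$.
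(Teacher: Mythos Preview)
Your proposal is correct and follows essentially the same approach as the paper: you use the identical product of $sm$ zero-divisors
\[
\prod_{k=2}^{s}(x_{1,1}+x_{1,k})^{m}\cdot (x_{2,1}+x_{2,2})^{m-1}\cdot (x_{2,1}+x_{2,3}),
\]
the same upper bound $\TC_s\le s\dim$, and the same cohomology relations to force the unique surviving monomial. The only cosmetic difference is that the paper verifies this product equals the top class by doing the case $s=3$ explicitly and then inducting on $s$ (multiplying by one further factor $(x_{1,1}+x_{1,s+1})^{m}$), whereas you argue the combinatorial uniqueness directly for all $s$ at once; both routes amount to the same slot-by-slot elimination using $x_{1,i}x_{2,i}=0$ and the truncation $x_{u,i}^{m+1}=0$.
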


This is the same (but much simplified) phenomenon for $\TC_s(\RP^m)$ studied in~\cite{MR3770001,MR3795624}. The case $m=2$ is essentially contained in~\cite[Proposition~5.1]{MR3576004}.

\section{Proof}
We assume familiarity with the basic ideas, definitions and results on Rudyak's higher topological complexity, a variant of Farber's original concept (see~\cite{MR3331610}). In what follows all cohomology groups are taken with mod 2 coefficients. 

The first ingredient we need is the well-known description of the cohomology ring of the connected sum $M\#N$ of two $n$-manifolds $M$ and $N$: Using the cofiber sequence 
$$
S^{n-1}\hookrightarrow M\#N\to M\vee N
$$
one can see that the cohomology ring $H^*(M\#N)$ is the quotient of $H^*(M\vee N)$ by the ideal generated by the sum $[M]^*+[N]^*$ of the duals of the (mod 2) fundamental classes of $M$ and $N$. In particular, for the $g$-iterated connected sum $g\RP^m$ of $\RP^m$ with itself, we have:

\begin{lemma}\label{ing2}
The cohomology ring of $g\mathbb{R}\emph{P}^m$ is generated by 1-dimensional cohomology classes $x_u$, for $1\le u\le g$, subject to the three relations:
\begin{itemize}
\item $x_ux_v=0$, for $u\neq v$;
\item $x_u^{m+1}=0$;
\item $x_u^m=x_v^m$.
\end{itemize}
\end{lemma}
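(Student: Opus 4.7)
My plan is to prove the lemma by induction on $g$, repeatedly applying the quotient formula for $H^*(M\#N)$ stated just above the lemma. The base case $g=1$ reduces to the standard presentation $H^*(\RP^m)=\mathbb{F}_2[x_1]/(x_1^{m+1})$, under which the first and third relations of the lemma are vacuous.

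For the inductive step, I would write $g\RP^m=M\#N$ with $M=(g-1)\RP^m$ and $N=\RP^m$. By the inductive hypothesis, $H^*(M)$ is generated by $x_1,\ldots,x_{g-1}$ satisfying the three relations, so $H^m(M)$ is one-dimensional and spanned by the common value $x_1^m=\cdots=x_{g-1}^m$; Poincar\'e duality over mod 2 then identifies this class with the dual fundamental class $[M]^*$. Analogously $[N]^*=x_g^m$. In positive degrees $H^*(M\vee N)=H^*(M)\oplus H^*(N)$ with all cross-summand products vanishing, so $H^*(M\vee N)$ is generated by $x_1,\ldots,x_g$ subject to $x_ux_v=0$ for $u\neq v$, $x_u^{m+1}=0$, and $x_1^m=\cdots=x_{g-1}^m$. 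Quotienting by $[M]^*+[N]^*=x_u^m+x_g^m$ (for any $u<g$) extends the chain of top-degree equalities to include $x_g^m$, producing exactly the three relations claimed.

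The main point requiring care is the identification of $[M]^*$ with $x_u^m$ along the induction. Fortunately this is immediate from the inductive description: the ring just displayed has one-dimensional nonzero top cohomology, spanned by the common $x_u^m$, which is then the unique candidate for the dual fundamental class of the closed connected $m$-manifold $M$. I do not expect any further obstacle, since the remaining manipulations are purely formal consequences of the wedge-then-quotient recipe.
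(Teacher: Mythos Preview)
Your inductive argument is correct and is precisely the intended unpacking of the paper's approach: the paper does not spell out a proof of this lemma, but simply records it as a direct consequence of the quotient description $H^*(M\#N)\cong H^*(M\vee N)/([M]^*+[N]^*)$, which is exactly what you iterate. The only point one might amplify is that the ideal generated by $[M]^*+[N]^*$ is one-dimensional (concentrated in top degree), so passing to the quotient adds \emph{only} the single relation $x_u^m=x_g^m$ and leaves degrees below $m$ untouched---you clearly have this in mind, and it follows immediately from the relations $x_u^{m+1}=0$ and $x_ux_v=0$.
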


The top class in $H^*(g\mathbb{R}\mathrm{P}^m)$ is denoted by $t$; it is given by any power $x_u^m$ with $1\le u\le g$.

\begin{corollary}
The cohomology ring of the $s$-fold cartesian product of $g\mathbb{R}\mathrm{P}^m$ with itself is given by
\begin{equation}\label{topclass}
H^*(g\mathbb{R}\emph{P}^m\times\cdots\times g\mathbb{R}\emph{P}^m)\cong\bigotimes_{j=1}^s\left(\mathbb{Z}_2[x_{1,j},\ldots,x_{g,j}] / I_{{g,j}}\rule{0mm}{4mm}\right).
\end{equation}
Here $x_{u,j}$ is the pull back of $x_u\in H^1(\mathbb{R}\emph{P}^m)$ under the $j$-projection map $\left(\mathbb{R}\emph{P}^m\right)^{\times s}\to\mathbb{R}\emph{P}^m$, and $I_{g,j}$ is the ideal generated by the elements $x_{u,j}^{m+1}$, $x_{u,j}^m+x_{v,j}^m$ and $x_{u,j}x_{v,j}$ for $u\neq v$.
\end{corollary}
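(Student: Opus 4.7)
The plan is to deduce the corollary directly from Lemma~\ref{ing2} via the Künneth formula. Since we work throughout with mod 2 coefficients, $\mathbb{Z}_2$ is a field, so Künneth yields a ring isomorphism
\[
H^{*}\!\left((g\RP^m)^{\times s}\right)\;\cong\;\bigotimes_{j=1}^{s} H^{*}(g\RP^m),
\]
where the tensor product is taken over $\mathbb{Z}_2$ and each factor is a graded commutative ring.

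Next, I would plug in the presentation from Lemma~\ref{ing2}: the $j$-th factor $H^{*}(g\RP^m)$ is identified with $\mathbb{Z}_2[x_1,\ldots,x_g]/I_g$ subject to the three families of relations. Writing $x_{u,j}$ for the image of $x_u$ in the $j$-th tensor factor, naturality of the Künneth isomorphism identifies $x_{u,j}$ with the pullback of $x_u$ along the $j$-th projection $(g\RP^m)^{\times s}\to g\RP^m$, as asserted. Since tensor products of quotients of polynomial rings by homogeneous ideals are themselves quotients of the polynomial ring in the disjoint union of variables by the sum of the extended ideals, one obtains precisely
\[
\bigotimes_{j=1}^{s}\bigl(\mathbb{Z}_2[x_{1,j},\ldots,x_{g,j}]/I_{g,j}\bigr),
\]
with $I_{g,j}$ generated by $x_{u,j}^{m+1}$, $x_{u,j}^m+x_{v,j}^m$, and $x_{u,j}x_{v,j}$ for $u\neq v$, as stated.

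There is no real obstacle here: the only thing to verify is that the relations from different tensor factors do not interact — in particular, products $x_{u,j}x_{v,k}$ with $j\neq k$ remain non-zero and are not subject to any relation beyond graded commutativity — which is immediate from the definition of the tensor product of graded $\mathbb{Z}_2$-algebras. Thus the corollary is a purely formal consequence of Lemma~\ref{ing2} and Künneth, and I would present it as a one-paragraph derivation.
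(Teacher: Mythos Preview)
Your argument is correct and is exactly the intended one: the paper states the corollary without proof because it is an immediate consequence of Lemma~\ref{ing2} together with the K\"unneth isomorphism over the field $\mathbb{Z}_2$. Your one-paragraph derivation (K\"unneth gives the tensor product of rings, then insert the presentation of each factor and rename generators via the projections) is precisely what is being taken for granted.
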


We let $t_j\in H^m(\left(\mathbb{R}\mathrm{P}^m\right)^{\times s})$ stand for the image of the top class $t\in H^m(\mathbb{R}\mathrm{P}^m)$ under the $j$-th projection map $\left(\mathbb{R}\mathrm{P}^m\right)^{\times s}\to\mathbb{R}\mathrm{P}^m$. The top class in~(\ref{topclass}) is then the product $t_1 t_2\cdots t_s$, which agrees with any product $x_{u_1,1}^m x_{u_2,2}^m\cdots x_{u_s,s}^m$.

The second ingredient we need concerns with standard estimates for the higher topological complexity of CW complexes:
\begin{lemma}[{\cite[Theorem~3.9]{MR3331610}}]\label{ing1}
For a path connected CW complex $X$, $$\zcl_s(X)\le\TC_s(X)\leq s\dim(X),$$ where $\zcl_s(X)$ is the maximal length of non-zero cup products of $s$-th zero divisors, i.e., of elements in the kernel of the $s$-iterated cup-product map $H^*(X)^{\otimes s}\to H^*(X)$.
\end{lemma}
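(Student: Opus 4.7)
The plan is to prove the two inequalities of Lemma \ref{ing1} separately, using the standard sectional-category interpretation of $\TC_s$. Recall that $\TC_s(X)$ equals the sectional category of the evaluation fibration $e_s\colon X^{J_s}\to X^s$, where $J_s$ is a wedge of $s$ unit intervals and $e_s$ records the values at the free endpoints; that is, $\TC_s(X)$ is the smallest $k$ admitting an open cover $X^s=U_0\cup\cdots\cup U_k$ on each of whose parts $e_s$ has a continuous section.

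For the upper bound, the key general fact is that the sectional category of any fibration $p\colon E\to B$ is bounded by $\cat(B)$: on any open $U\subseteq B$ whose inclusion is null-homotopic in $B$, homotopy lifting promotes the null-homotopy to an honest section of $p$ over $U$. Applying this to $p=e_s$ gives $\TC_s(X)\le\cat(X^s)$, and I would finish using the classical bound $\cat(Y)\le\dim(Y)$ for path-connected CW complexes, together with $\dim(X^s)=s\dim(X)$.

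For the lower bound, I would run the classical zero-divisor cup-length obstruction. Assume $\TC_s(X)\le k$, so that $X^s$ admits an open cover $U_0,\dots,U_k$ on each of which $e_s$ admits a section. The existence of a section over $U_i$ is equivalent to the inclusion $U_i\hookrightarrow X^s$ being homotopic to a map that factors through the thin diagonal $\Delta_s\colon X\to X^s$. In particular, every $s$-th zero divisor $z\in\ker\bigl(\Delta_s^*\colon H^*(X^s)\to H^*(X)\bigr)$ restricts to zero in $H^*(U_i)$, and hence (via the long exact sequence of the pair) lifts to a relative class $\tilde z_i\in H^*(X^s,U_i)$. Given $k+1$ such zero divisors $z_0,\dots,z_k$, the product $\tilde z_0\smile\cdots\smile\tilde z_k$ lives in $H^*(X^s,U_0\cup\cdots\cup U_k)=H^*(X^s,X^s)=0$ and restricts to $z_0\smile\cdots\smile z_k$, forcing this product to vanish in $H^*(X^s)$.

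The main technical point is justifying the relative cohomology cup product in the last step, since the $U_i$ are arbitrary open sets rather than CW subcomplexes. This is handled in the standard way, either by shrinking the cover to open neighborhoods of subcomplexes in a fine triangulation of $X^s$, or by working with \v{C}ech or Alexander-Spanier cohomology, where the cup-product pairing on relative cohomology behaves as expected for arbitrary open covers.
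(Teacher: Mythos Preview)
Your argument is the standard one and is correct, but there is nothing in the paper to compare it against: Lemma~\ref{ing1} is not proved in this paper at all. It is simply quoted from \cite[Theorem~3.9]{MR3331610} as a black-box input, with no accompanying argument. So your write-up is not reproducing or deviating from the paper's proof; it is supplying a proof where the paper gives none.

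For what it is worth, the outline you give is exactly the classical one underlying the cited result: the upper bound via $\TC_s(X)=\mathrm{secat}(e_s)\le\cat(X^s)\le\dim(X^s)=s\dim(X)$, and the lower bound via lifting zero divisors to relative classes over the pieces of a sectioned cover and using the relative cup product. Your remark about the technical point concerning relative cup products for arbitrary open sets is appropriate; the usual fixes (CW approximation of the cover, or \v{C}ech/Alexander--Spanier cohomology) handle it. One small wording issue: rather than saying a section over $U_i$ is ``equivalent'' to the inclusion $U_i\hookrightarrow X^s$ factoring through $\Delta_s$ up to homotopy, it is cleaner to say that the existence of such a section \emph{implies} this factorization (via evaluation at the wedge point of $J_s$), which is all you need for the argument.
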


Note that any element $x_{r,i}+x_{r,j}$ is a zero-divisor, so that Theorem~\ref{maintheorem} follows from:
\begin{proposition}
The product
$$
(x_{1,1}+x_{1,2})^m (x_{1,1}+x_{1,3})^m \cdots (x_{1,1}+x_{1,s})^m (x_{2,1}+x_{2,2})^{m-1} (x_{2,1}+x_{2,3})
$$
is the top class in $H^*((g\mathbb{R}\emph{P}^m)^{\otimes s})$ provided $g,m\ge2$ and $s\ge3$.
\end{proposition}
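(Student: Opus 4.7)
The plan is to expand the given product using the binomial theorem and then to collapse most terms with the relations $x_{u,j}x_{v,j}=0$ (for $u\neq v$), $x_{u,j}^{m+1}=0$, and $x_{u,j}^m=t_j$ coming from Lemma~\ref{ing2} and~(\ref{topclass}). First, write
\[
\prod_{j=2}^{s}(x_{1,1}+x_{1,j})^{m}=\sum_{\vec{k}}\left(\prod_{j=2}^{s}\binom{m}{k_j}\right)x_{1,1}^{|\vec{k}|}\prod_{j=2}^{s}x_{1,j}^{m-k_j},
\]
with $\vec{k}=(k_2,\ldots,k_s)$, $0\le k_j\le m$, $|\vec{k}|=\sum k_j$, and
\[
(x_{2,1}+x_{2,2})^{m-1}(x_{2,1}+x_{2,3})=\sum_{\ell=0}^{m-1}\binom{m-1}{\ell}x_{2,1}^{\ell+1}x_{2,2}^{m-1-\ell}+\sum_{\ell=0}^{m-1}\binom{m-1}{\ell}x_{2,1}^{\ell}x_{2,2}^{m-1-\ell}x_{2,3}.
\]
The binomial expansions are valid because the variables involved within each expansion sit in distinct tensor factors of~(\ref{topclass}), so no cross relations apply within them.

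Next, I would dispose of all but one term by inspecting the three distinguished tensor factors $j=1,2,3$, the only places where $x_{1,j}$ and $x_{2,j}$ can collide via $x_{1,j}x_{2,j}=0$. In factor $1$, a typical monomial contributes $x_{1,1}^{|\vec{k}|}x_{2,1}^{e}$ with $e=\ell+1\in\{1,\ldots,m\}$ for the first sum and $e=\ell\in\{0,\ldots,m-1\}$ for the second; survival requires $|\vec{k}|=0$ or $e=0$. Working through the first sum, $|\vec{k}|=0$ forces all $k_j=0$, whereupon factor $2$ becomes $x_{1,2}^{m}x_{2,2}^{m-1-\ell}$, which (using $m\ge 2$) forces $\ell=m-1$, leaving the single surviving monomial $\prod_{j=2}^{s}x_{1,j}^{m}\cdot x_{2,1}^{m}=t_1 t_2\cdots t_s$ with coefficient $\binom{m-1}{m-1}=1$. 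For the second sum, either $|\vec{k}|=0$ and factor $3$ kills the term via $x_{1,3}^{m}x_{2,3}=0$; or $|\vec{k}|\ge 1$, forcing $\ell=0$, after which factor $2$ forces $k_2=m$ and factor $3$ forces $k_3=m$, hence $|\vec{k}|\ge 2m>m$ and $x_{1,1}^{|\vec{k}|}=0$.

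The main obstacle is simply the bookkeeping of which relation kills each bad case, but the hypotheses of the proposition are exactly what is needed: $s\ge 3$ provides a third tensor factor in which $x_{2,3}$ can live apart from factor~$1$ (the home of $x_{2,1}$), $m\ge 2$ makes $m-1\ge 1$ so that factor~$2$ indeed forces $k_2=m$ in the bad branch, and $g\ge 2$ furnishes the two distinct generators $x_1,x_2$ of Lemma~\ref{ing2} used throughout.
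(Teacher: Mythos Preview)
Your argument is correct: the binomial expansion followed by the four-case elimination (first sum with $|\vec{k}|=0$; first sum with $|\vec{k}|\ge1$; second sum with $|\vec{k}|=0$; second sum with $|\vec{k}|\ge1$) accounts for every monomial, and the single survivor is exactly $t_1\cdots t_s$ with coefficient~$1$. One tiny remark: in the first-sum branch the parenthetical ``(using $m\ge2$)'' is not actually needed there (any $m\ge1$ suffices to force $\ell=m-1$); the hypothesis $m\ge2$ is genuinely used only in the second-sum branch, where you need $m-1\ge1$ so that factor~$2$ forces $k_2=m$.

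The paper proceeds a bit differently: it first verifies the case $s=3$ by a short chain of equalities, peeling off one factor at a time and invoking a single relation at each step, and then passes to general $s$ by induction, multiplying the already established top class $t_1\cdots t_s$ by the extra factor $(x_{1,1}+x_{1,s+1})^m$ and using $x_{1,1}^{m+1}=0$. Your direct approach handles all $s\ge3$ simultaneously and makes more visible where each hypothesis ($g\ge2$, $m\ge2$, $s\ge3$) enters, at the price of heavier bookkeeping; the paper's inductive organisation is shorter and avoids the full binomial expansion, but hides the role of the hypotheses inside the base case. Both routes rest on exactly the same relations from Lemma~\ref{ing2}.
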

\begin{proof}
The case $s=3$ follows from a direct calculation: 
\begin{align}
(x_{1,1}+x_{1,2})^m & (x_{1,1}+x_{1,3})^m (x_{2,1}+x_{2,2})^{m-1} (x_{2,1}+x_{2,3})\nonumber\\
& {}=(x_{1,1}^m+\cdots+x_{1,2}^m) (x_{1,1}^m+\cdots+x_{1,3}^m) (x_{2,1}^{m-1}+\cdots+x_{2,2}^{m-1}) (x_{2,1}+x_{2,3}),\label{uno}\\
& {}=(x_{1,1}^m+\cdots+x_{1,2}^m) x_{1,3}^m (x_{2,1}^{m-1}+\cdots+x_{2,2}^{m-1}) (x_{2,1}+x_{2,3})\label{dos}\\
& {}=(x_{1,1}^m+\cdots+x_{1,2}^m) x_{1,3}^m (x_{2,1}^{m-1}+\cdots+x_{2,2}^{m-1}) x_{2,1}\label{tres}\\
& {}=x_{1,2}^m x_{1,3}^m (x_{2,1}^{m-1}+\cdots+x_{2,2}^{m-1}) x_{2,1}\label{cuatro}\\
& {}=x_{1,2}^m x_{1,3}^m x_{2,1}^{m-1} x_{2,1}\label{cinco}\\
& {}=x_{1,2}^m x_{1,3}^m x_{2,1}^m\,=\,t_1t_2t_3.\nonumber
\end{align}
Note that equality in~(\ref{dos}) holds because of the description of $I_{g,s}$: the factor $t_3$ in the top class $t_1t_2t_3$ can only arise from the summand $x_{1,3}^m$ in the second factor of~(\ref{uno}). Likewise, equality in~(\ref{tres}) comes from the relation $x_{1,3}x_{2,3}=0$, equality in~(\ref{cuatro}) comes from the relation $x_{1,1} x_{2,1}=0$, and equality in~(\ref{cinco}) comes from the relation $x_{1,2} x_{2,2}=0$.

The general case then follows easily from induction:
\begin{align*}
(x_{1,1}+x_{1,2})^m & (x_{1,1}+x_{1,3})^m \cdots (x_{1,1}+x_{1,s+1})^m (x_{2,1}+x_{2,2})^{m-1} (x_{2,1}+x_{2,3})\\
& {}=t_1\cdots t_s (x_{1,1}+x_{1,s+1})^m = t_1\cdots t_s x_{1,s+1}^m = t_1\cdots t_{s+1},
\end{align*}
where the next-to-last equality holds in view of the relation $x_{1,1}^{m+1}=0$.
\end{proof}


\bigskip\smallskip

{\small \sc Departamento de Matem\'aticas

Centro de Investigaci\'on y de Estudios Avanzados del I.P.N.

Av.~Instituto Polit\'ecnico Nacional n\'umero 2508

San Pedro Zacatenco, M\'exico City 07000, M\'exico

{\tt jesus@math.cinvestav.mx}

{\tt  jaguzman@math.cinvestav.mx}}
\end{document}